\newtheorem{theorem}{Theorem}[section]
\newtheorem{prop}{Proposition}[section]
\newtheorem{corr}{Corollary}[section]
\newtheorem{conj}{Conjecture}[section]
\newcommand{\E}{{\mathbb E}}
\newcommand {\PP}{{\mathbb P}}
\newcommand{\sss}{\scriptscriptstyle}
\begin{document}

\title{A spatial epidemic model with site contamination}\parskip=5pt plus1pt minus1pt \parindent=0pt
\author{Tom Britton\thanks{Department of Mathematics, Stockholm University; {\tt tomb@math.su.se}} \and Maria Deijfen\thanks{Department of Mathematics, Stockholm University; {\tt mia@math.su.se}} \and Fabio Lopes\thanks{Center for Mathematical Modeling, Universidad de Chile; {\tt flopes@dim.uchile.cl}}}
\date{May 2017}
\maketitle

\begin{abstract}
We introduce the effect of site contamination in a model for spatial epidemic spread and show that the presence of site contamination may have a strict effect on the model in the sense that it can make an otherwise subcritical process supercritical. Each site on $\mathbb{Z}^d$ is independently assigned a random number of particles and these then perform random walks restricted to bounded regions around their home locations. At time 0, the origin is infected along with all its particles. The infection then spread in that an infected particle that jumps to a new site causes the site along with all particles located there to be infected. Also, a healthy particle that jumps to a site where infection is presents, either in that the site is infected or in the presence of infected particles, becomes infected. Particles and sites recover at rate $\lambda$ and $\gamma$, respectively, and then become susceptible to the infection again. We show that, for each given value of $\lambda$, there is a positive probability that the infection survives indefinitely if $\gamma$ is sufficiently small, and that, for each given value of $\gamma$, the infection dies out almost surely if $\lambda$ is large enough. Several open problems and modifications of the model are discussed, and some natural conjectures are supported by simulations.

\noindent
\vspace{0.3cm}

\noindent \emph{Keywords:} Spatial epidemic, interacting particle system, phase transition, critical value.

\vspace{0.2cm}

\noindent AMS 2010 Subject Classification: 60K35, 92D30.
\end{abstract}

\section{Introduction}

Stochastic epidemic models describe the spread of an infection transmitted in some random way via contacts between individuals; see e.g.\ \cite{AB} for an introduction. The simplest models deal with unstructured populations, but in many situations it is natural to incorporate spatial structure, so that the individuals have positions in space. The most studied case in the mathematical literature is when the individuals are represented by the sites of the $\mathbb{Z}^d$ lattice; see \cite{AS,vdBGS,DN,Kul}. We are interested in modeling a situation where individuals do not have a fixed position, but move in space. They may infect each other upon direct contact, but may also be infected via contaminated locations. In particular for macroparasite diseases among domestic animal populations this is often the case \cite{H_15}, but also human diseases may spread through contaminated locations such as door handles and toilet facilities.  We formulate a simple model incorporating this phenomenon and demonstrate that the presence of site contamination may have an impact on the epidemic spread. Specifically, we show that site contamination can cause a subcritical model -- that is, a model where the infection cannot survive indefinitely -- to become supercritical. Conversely, measures aimed at controlling site contamination has the potential to make a supercritical model subcritical. We also formulate a number of open problems for our model, and other versions of it.

The model that we will analyze describes the evolution of an infection in a population of moving particles on $\mathbb{Z}^d$. To start with, for each $x\in\mathbb{Z}^d$, a number $M_x$ of healthy particles are placed at $x$, where $\{M_x\}$ are i.i.d.\ random variables. Each particle is marked with its home location, which is the site where it is initially placed. Furthermore, each particle has an accessible region associated with it, denoted by $\mathcal{R}_x$ and consisting of all sites within $L_1$-distance at most $k<\infty$ from its home location. A particle can move in its accessible region, but not beyond that. Specifically, starting at time 0, the particles perform independent random walks with jump rate $\beta>0$ in their respective accessible regions, that is, after an exponentially distributed time with mean $\beta^{-1}$, a particle at $y$ with home location $x$ jumps to a randomly chosen site in $\mathcal{N}_y\cap \mathcal{R}_x$, where $\mathcal{N}_y$ denotes the set of nearest neighbors of $y$. Also at time 0, the origin and all its  particles are infected. The infection then evolves in continuous time according to the following rules. If an infected particle jumps to a new site, then the site becomes contaminated and all other particles located there become infected. If a healthy particle jumps to a contaminated site or to a site where there are infected particles present, then it becomes infected. An infected particle recovers at rate $\lambda$ and a contaminated site is cleared at rate $\gamma$, independently of everything else. Recovered particles and cleared sites become susceptible to the infection again, and may be re-infected. The model is hence of SIS-type for both particles and sites. Note that particles/sites are only infected when a jump occurs. A particle that recovers is hence not immediately re-infected by the site where it is located in case this is infected, or by other infected particles already present at the same site. Similarly, a site that is cleared is not immediately re-infected by infected particles located at the site.

We remark that the model could of course also be studied with $k=\infty$, so that the particles perform independent unrestricted random walks on $\mathbb{Z}^d$. This case however is likely to be considerably more complicated to analyze. The model has then been analyzed without site contamination by Kesten and Sidoravicius \cite{KS_06}; see below for further comments.

The model has three parameters $\beta$, $\lambda$ and $\gamma$ describing the time dynamics, and $M$ describing the (random) initial number of particles at a site. Without loss of generality (time-scaling) we set $\beta=1$ from now on. We will be interested in how the possibility for the infection to survive indefinitely is affected as $\lambda$ and/or $\gamma$ change.  Write $S_{\lambda,\gamma}$ for the event that there are infected sites or particles at infinitely large times. Clearly, if $\lambda=0$ or $\gamma=0$, so that particles never recover or sites are never cleared, then $\PP(S_{\lambda,\gamma})=1$.  If $\lambda=\infty$, so that particles recover instantaneously, then $\PP(S_{\lambda,\gamma})=0$ (provided $\gamma>0$), while $\gamma=\infty$ corresponds to a model without site contamination, where the value of $\PP(S_{\lambda,\gamma})$ is not a priori clear. We will hence assume that $\lambda\in(0,\infty)$ and $\gamma\in(0,\infty]$.

We first observe that, for a fixed value of $\gamma$ ($\lambda$), the probability of infinite survival is non-decreasing as $\lambda$ ($\gamma$) decreases, that is, slower recovery for particles/sites makes it easier (or at least not harder) for the infection to survive.

\begin{prop}[Monotonicity in $\lambda$ and $\gamma$]\label{prop:monoton} If $0<\gamma_0<\gamma_1<\infty$, then $\PP(S_{\lambda,\gamma_0})\geq \PP(S_{\lambda,\gamma_1})$ for any fixed value of $\lambda$. Similarly, if $0<\lambda_0<\lambda_1<\infty$, then $\PP(S_{\lambda_0,\gamma})\geq \PP(S_{\lambda_1,\gamma})$ for any fixed value of $\gamma$.
\end{prop}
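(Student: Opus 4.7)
My plan is to prove both statements by a standard coupling via a Harris-type graphical construction, with the two recovery rates (resp.\ clearing rates) coupled by thinning of a common Poisson process. The goal is to realize the two processes on the same probability space so that, almost surely for all $t\ge 0$, every particle infected in the ``fast-recovery'' process is also infected in the ``slow-recovery'' process, and likewise every contaminated site. The statement of the proposition then follows because $S_{\lambda_1,\gamma}\subseteq S_{\lambda_0,\gamma}$ (resp.\ $S_{\lambda,\gamma_1}\subseteq S_{\lambda,\gamma_0}$) in the coupling.

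\textbf{Construction of the coupling.} Fix $\lambda$ and $0<\gamma_0<\gamma_1<\infty$; the other case is symmetric. Attach to each particle an independent rate-$1$ Poisson process of jump times (with i.i.d.\ uniform choices of nearest-neighbour target in its accessible region) and an independent rate-$\lambda$ Poisson process of recovery marks. Attach to each site $x\in\Z^d$ an independent rate-$\gamma_1$ Poisson process of clearing marks, and independently mark each such clearing time by a Bernoulli$(\gamma_0/\gamma_1)$ label ``thin''. Both processes are driven by the same jump and recovery marks. The $\gamma_1$-process executes a clearing attempt at every clearing mark; the $\gamma_0$-process executes it only at marks labelled ``thin''. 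By the thinning property of Poisson processes, the two marginals have the correct law.

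\textbf{Induction on events.} I would now verify the domination hypothesis
\[
I^{(\gamma_1)}_t \subseteq I^{(\gamma_0)}_t, \qquad C^{(\gamma_1)}_t \subseteq C^{(\gamma_0)}_t,
\]
where $I$ and $C$ are the sets of infected particles and contaminated sites, by induction on the (a.s.\ locally finite) sequence of marked times. The initial configuration at $t=0$ is identical, so the inclusion holds trivially. I then check that each possible event preserves it: (i) at a jump time, the same particle moves to the same target in both processes; if the moving particle is infected in the $\gamma_1$-process it is also infected in the $\gamma_0$-process, so the resulting infection/contamination of target site and co-located particles is at least as large in the $\gamma_0$-process, and if the moving particle is healthy the target in the $\gamma_0$-process is at least as contaminated/infected, so any transmission to the moving particle in the $\gamma_1$-process also occurs in the $\gamma_0$-process; (ii) at a recovery mark, the moving particle, if infected in either process, turns healthy, which can only shrink $I^{(\gamma_i)}$ and therefore preserves the inclusion; (iii) at a clearing mark labelled ``thin'', both sites clear simultaneously, again preserving inclusion; (iv) at a clearing mark not labelled ``thin'', only the $\gamma_1$-site clears, which shrinks $C^{(\gamma_1)}$ and thus preserves $C^{(\gamma_1)}\subseteq C^{(\gamma_0)}$.

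\textbf{Conclusion and obstacle.} Once the inclusion is established for all $t\ge 0$, the event that the $\gamma_1$-process has an infected particle or contaminated site at arbitrarily large times is contained in the corresponding event for the $\gamma_0$-process, yielding $\PP(S_{\lambda,\gamma_1})\le \PP(S_{\lambda,\gamma_0})$. The monotonicity in $\lambda$ is proved by the same scheme, thinning the recovery marks instead of the clearing marks. The only genuine care needed is in step (i): the transition rule for a jump depends simultaneously on the health of the jumping particle, the contamination of the target site, and the health of particles already at the target, so one must enumerate a few cases to confirm that in every case the inclusion is preserved. I do not foresee a substantive obstacle beyond this bookkeeping, since the SIS dynamics are monotone in a way that matches the thinning construction.
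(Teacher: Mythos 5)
Your proposal is correct and follows essentially the same route as the paper's own proof: couple the two processes by giving particles identical jump and recovery marks and obtaining the $\gamma_0$-process clearance times by Bernoulli$(\gamma_0/\gamma_1)$ thinning of the rate-$\gamma_1$ clearing marks, then observe that infection in the $\gamma_1$-process implies infection in the $\gamma_0$-process at every time. Your event-by-event verification is just a more explicit rendering of what the paper dismisses as ``not hard to see,'' and your thinning is stated in the correct direction (retain each mark with probability $\gamma_0/\gamma_1$), which is clearly what the paper intends.
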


According to Proposition \ref{prop:monoton}, a larger element of site contamination (quantified by the site clearance rate $\gamma$) makes it easier for the infection to survive indefinitely, but does it make it \emph{strictly} easier? Our two main results give a partial answer to this. The first result states that, for any value of the particle recovery rate $\lambda$, if the clearance rate of the sites is small, then the infection has a strictly positive probability of surviving indefinitely. The result is proved by comparison with a site percolation process, and is therefore only valid for $d\geq 2$, but we conjecture that it is true also for $d=1$; see below for further comments. Here $p_c^{\sss\rm{site}}$ denotes the critical value for site percolation.

\begin{theorem}[Supercriticality for small $\gamma$]\label{th:super}
Fix $d\geq 2$, $k\geq 1$ and $\lambda\in(0,\infty)$, and assume that $\PP(M=0)<p_c^{\sss\rm{site}}$. Then $\PP(S_{\lambda,\gamma})>0$ if $\gamma>0$ is sufficiently small.
\end{theorem}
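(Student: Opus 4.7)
The plan is to compare the infection dynamics to a supercritical Bernoulli site percolation on $\mathbb Z^d$. Declare a site $x$ \emph{populated} when $M_x\geq 1$; by the assumption $\PP(M=0)<p_c^{\sss\rm{site}}$, the populated sites percolate, so with positive probability the origin belongs to an infinite connected cluster $\mathcal C$ of populated sites. I will argue that, when $\gamma$ is sufficiently small, with positive probability the set of sites that are ever contaminated contains such a cluster. Since no new site can be infected after the infection has globally died out, infinitely many sites ever being infected forces the infection to survive for all times, giving $\PP(S_{\lambda,\gamma})>0$.

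The key local input is a propagation estimate. Suppose a populated site $x$ has just become contaminated, and $y$ is a populated nearest neighbor of $x$. Since $k\geq 1$ we have $x\in\mathcal R_y$, so a particle with home $y$ can perform a \emph{hop-and-return}: jump from $y$ to $x$, where it is infected by the contamination of $x$, fail to recover for a bounded time, and jump back to $y$, thereby contaminating site $y$. The probability of such a sequence occurring within a unit time interval is bounded below by a constant $q=q(d,\lambda)>0$ that does not depend on $\gamma$. Meanwhile the contamination at $x$ persists for an $\mathrm{Exp}(\gamma)$ time, hence of order $1/\gamma$, during which of order $1/\gamma$ essentially independent attempts can be made. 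Consequently the probability that $y$ becomes contaminated before $x$ is cleared tends to $1$ as $\gamma\to 0$.

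To pass from this local estimate to global survival I would use a renormalisation combined with a stochastic domination theorem for finite-range dependent percolation. For each site $x$, define a block-good event $G_x$ measurable with respect to the $M_y$'s, particles and Poisson clocks in an $O(k)$-neighborhood of $x$ over a time window of length $T_\gamma$, requiring both that $M_x\geq 1$ and that, whenever $x$ is contaminated at some time in the window, every populated nearest neighbor of $x$ becomes contaminated before the window closes. The local estimate then gives $\PP(G_x)\to 1$ as $\gamma\to 0$. Because the accessible regions $\mathcal R_y$ are of bounded range, $\{G_x\}_{x\in\mathbb Z^d}$ forms a finite-range dependent family, so by the Liggett-Schonmann-Stacey theorem it stochastically dominates Bernoulli site percolation of parameter $p(\gamma)\to 1$. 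For $\gamma$ small this is supercritical, and on the event that the origin lies in the infinite block-good cluster, the infection propagates along it to infinitely many sites, proving the theorem.

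The main obstacle is in the last step, namely in constructing the block-good events so that $\{G_x\}$ is genuinely finite-range dependent \emph{and} captures the propagation mechanism above. The time window $T_\gamma$ must be long enough for the local estimate to succeed with high probability, but the spatial and temporal extent of $G_x$ must be small enough that $G_x$ and $G_{x'}$ use disjoint randomness when $\|x-x'\|$ is large. This requires truncating particle trajectories and infection chains to bounded space-time regions and verifying that rare escapes from this truncation do not spoil the comparison; this is the only non-routine part of the argument.
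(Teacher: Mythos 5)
Your local hop-and-return estimate is sound and matches the heuristic the paper itself uses, but the renormalisation step on which your whole argument rests has a genuine gap---and it is precisely the step the authors state they were unable to carry out. First, the event $G_x$ as you define it is not measurable with respect to the randomness in an $O(k)$-neighbourhood of $x$: the clause ``whenever $x$ is contaminated at some time in the window'' refers to the contamination times of $x$, which are determined by the entire history of the process, hence by arbitrarily distant randomness. If you localize it by quantifying over all hypothetical arrival times $t$ in the window, the requirement fails for $t$ near the end of the window (no time remains to propagate), so $\PP(G_x)$ cannot tend to $1$. Second, a single time window $[0,T_\gamma]$ common to all $x$ cannot certify survival at all: within any fixed finite horizon only finitely many sites are ever contaminated a.s., so on the infinite block-good cluster the infection does \emph{not} ``propagate along it to infinitely many sites''---reaching distance $n$ takes time growing with $n$. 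The honest version of your scheme is therefore a space-time (oriented) renormalisation in which the time coordinate of a block depends on when infection arrives; there the dependence is no longer finite range in any clean sense (contamination durations are ${\rm Exp}(\gamma)$, i.e.\ of the same order as the window itself). This is exactly what the remark following the paper's proof warns about: $k$-dependent fields can be dominated by product measures \cite{LSS}, ``but our process has complicated dependencies in space/time and we have not been able to sort out necessary estimates and other details in such a comparison.'' What you defer as ``the only non-routine part'' is the entire difficulty of the theorem.

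The paper circumvents all of this with a structural idea your proposal is missing: monotonicity plus sparsification. Removing particles only makes survival harder, so it suffices to prove survival when particles are placed only at the well-separated home locations $\mathbb{Z}^2_k$; then the accessible regions of distinct home sites meet only at corner sites, and a region is declared open if, from its (random) activation time, one of its own particles carries the infection from the activating corner to all the other corners before that corner is cleared. Whether a region is open is decided by that region's own particle count, trajectories and clocks run from the activation time (with the bound uniform over the particle's position at activation), so the open/closed labels are independent across regions and can be compared directly with \emph{independent} site percolation on the renormalized lattice---no time windows and no Liggett--Schonmann--Stacey domination are needed. One further slip, which your route leans on harder than the paper does: $\PP(M=0)<p_c^{\sss\rm{site}}$ only gives $\PP(M\geq 1)>1-p_c^{\sss\rm{site}}$, which does not imply that the populated sites percolate (in $d=2$, $1-p_c^{\sss\rm{site}}<p_c^{\sss\rm{site}}$); the argument actually requires $\PP(M\geq 1)>p_c^{\sss\rm{site}}$, which is how the paper uses its hypothesis, so your opening claim that ``the populated sites percolate'' is unjustified as stated.
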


The assumption on the distribution of $M$ can presumably be weakened, but some condition on $\PP(M=0)$ is indeed necessary to guarantee that the particle configuration is not too sparse. The second result states that, for any value of the site clearance rate $\gamma$, almost surely the infection dies out in finite time when the particle infectious period is short enough (that is, when the recovery rate is large enough). The result is proved under the assumption that the number of particles per site is almost surely bounded. We believe that this assumption can be weakened, possibly to the extent that no assumption on the tail of $M$ is needed.

\begin{theorem}[Subcriticality for large $\lambda$]\label{th:sub}
Fix $d\geq 1$, $k\geq 1$ and $\gamma\in(0,\infty]$, and assume that $M\leq \bar{m}$ a.s.\ for some $\bar{m}<\infty$. Then $\PP(S_{\lambda,\gamma})=0$ if $\lambda$ is sufficiently large.
\end{theorem}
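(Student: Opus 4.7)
\emph{Proof plan.} I would dominate the epidemic by a subcritical two-type Galton--Watson branching process with types (1) infected particles and (2) contaminated sites, and use a.s.\ extinction of the latter to deduce $\PP(S_{\lambda,\gamma})=0$. Each type-1 individual is assigned a lifetime Exp$(\lambda)$ and each type-2 individual a lifetime Exp$(\gamma)$; during its lifetime an individual produces offspring via the jump events of the epidemic, with every actual infection attributed to a cause (and possible phantom extras added for domination).

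To bound offspring production, let $R:=(2k+1)^d\bar{m}$. Only particles whose home lies within $L_1$-distance $k$ from a given site can ever reach it, and each such particle jumps at rate $\beta=1$, so $R$ is a deterministic upper bound on the total rate of incoming jumps at any site. An infected particle produces offspring in two ways: when it jumps to a site it contaminates that site (at most one type-2 offspring) and infects any healthy particles already there (at most $\bar{m}$ type-1 offspring per jump); while it rests, each healthy arrival at its location becomes infected (type-1 offspring, at total rate at most $R$). Integrating against the Exp$(\lambda)$ lifetime gives expected offspring counts of at most $(\bar{m}+R)/\lambda$ of type~1 and $1/\lambda$ of type~2. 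A contaminated site similarly produces at most $R/\gamma$ type-1 offspring and no type-2 offspring, yielding the mean matrix
\[
M_\lambda=\begin{pmatrix}(\bar m+R)/\lambda & 1/\lambda \\ R/\gamma & 0\end{pmatrix}.
\]

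The Perron--Frobenius eigenvalue of $M_\lambda$ is
\[
\rho(\lambda)=\tfrac12\Bigl[\tfrac{\bar m+R}{\lambda}+\sqrt{\bigl(\tfrac{\bar m+R}{\lambda}\bigr)^{2}+\tfrac{4R}{\lambda\gamma}}\,\Bigr],
\]
which tends to $0$ as $\lambda\to\infty$ (with the convention $R/\gamma=0$ when $\gamma=\infty$, so the bottom row vanishes and the case is covered). Hence $\rho(\lambda)<1$ for all sufficiently large $\lambda$, and standard multitype theory gives a.s.\ extinction of the branching process, and in particular finite total progeny.

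The main obstacle is turning these offspring bounds into a rigorous stochastic domination, since in the real epidemic a particle's jump clock is shared across many potential offspring events and the offspring of distinct infectious entities are not independent, as a branching process requires. I would address this with a graphical/coupling construction driven by a single Poisson field of jump marks: every infection or contamination event in the epidemic triggers a birth for the attributed parent in the auxiliary process, while any jump that would otherwise have been wasted (e.g.\ onto an already infected particle, onto an already contaminated site, or after recovery) is replaced by an independent phantom birth with the same rate. Such additions only increase the auxiliary process, so the epidemic is coupled below a branching process with mean matrix $M_\lambda$, and extinction of the latter forces extinction of the former.
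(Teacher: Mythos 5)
Your route is genuinely different from the paper's. You keep the epidemic itself and attribute every infection/contamination event to a parent, obtaining a two-type mean matrix whose Perron root tends to $0$ as $\lambda\to\infty$; the paper instead first dominates the epidemic by a ``maximal load epidemic'' (a constant $\bar{m}v_k$ particles per site, no movement, infection transmitted by rate-$1$ infectious and reinforcement signals), then turns the comparison object into a genuine single-type branching process whose individuals are infectious signals, by basing each local infection process on a fresh independent Poisson family and letting superseded local processes run to completion. The quantitative core of the two arguments is the same: a $\lambda$-independent factor (your $R/\gamma$; the paper's $(1-p_{\sss\rm{site}}^*)/p_{\sss\rm{site}}^*$) multiplied by quantities that vanish as $\lambda\to\infty$. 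One slip in your accounting: a jumping infected particle can infect up to $\bar{m}v_k\le R$ healthy particles at the landing site (particles from all $v_k$ home locations within reach may be present), not $\bar{m}$; this changes $(\bar m+R)/\lambda$ to $2R/\lambda$ and is harmless for the conclusion.

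The genuine gap is in your final paragraph, and it sits exactly where the paper invests most of its effort. Adding phantom births does not make the attributed offspring of distinct individuals independent: two infected particles alive at overlapping times draw their ``arrival'' offspring from the same jump clocks of the same healthy neighbours, so topping up each individual's count separately cannot produce the i.i.d.\ offspring structure that a bona fide branching process (and the extinction theory you invoke) requires. The usual breadth-first exploration argument is also delicate here, because lifetimes overlap in continuous time: revealing one individual's complete offspring reveals information about events occurring after another individual's birth, and conditional intensity bounds in the natural filtration do not automatically survive such conditioning. The paper solves this by redesigning the dominating process so that independence holds by construction (independent Poisson families per local process, superseded processes allowed to run on). The cleanest repair of your plan avoids the literal coupling altogether: define generations through your attribution and prove $\E[Z_{n+1}]\le \E[Z_n]M_\lambda$ componentwise by conditioning on each generation-$n$ individual's birth time -- your bounds are precisely conditional-expectation bounds given the history at birth (the ${\rm Exp}(\lambda)$ lifetime is fresh by memorylessness, and the incoming-jump intensity is pathwise at most $R$). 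Since $\rho(M_\lambda)<1$, this yields $\sum_n \E\bigl[Z_n^{(1)}+Z_n^{(2)}\bigr]<\infty$, hence finitely many infection events almost surely, after which the finitely many infected particles and sites recover and the infection dies out. With that replacement your argument is complete; no independence is needed, only first moments.
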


Intuitively, the probability that infection is transferred from an infected site to neighboring sites can be made arbitrarily large by increasing the site infection time (that is, by decreasing the clearance rate). The particles that have access to the site will then have time to return to the infected site many times and get re-infected, and each visit gives rise to an opportunity to pass the infection on. This explains Theorem \ref{th:super}. On the other hand, the probability of infection transfer can be made arbitrarily small by letting the particles recover very fast. Even if particles return to an infected site many times, the probability that they pass the infection on will then be small. This explains Theorem \ref{th:sub}.

Combining Theorem \ref{th:super} and \ref{th:sub}, yields that a larger element of site contamination can make a strict difference in the sense that it can make a subcritical model supercritical.

\begin{corr}\label{corr:super}
Fix $d\geq 2$, $k\geq 1$ and assume that $\PP(M=0)<p_c^{\sss\rm{site}}$ and $M\leq \bar{m}$ a.s.\ for some $\bar{m}<\infty$. Let $\lambda_0$ and $\gamma_0$ be such that $\PP(S_{\lambda_0,\gamma_0})=0$. Then there exist $\gamma_1\in(0,\gamma_0)$ such that $\PP(S_{\lambda_0,\gamma})>0$ for $\gamma<\gamma_1$.
\end{corr}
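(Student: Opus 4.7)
The plan is to derive Corollary \ref{corr:super} essentially as a direct bookkeeping exercise combining Theorem \ref{th:super}, Theorem \ref{th:sub}, and the monotonicity Proposition \ref{prop:monoton}. Note that the hypotheses of the corollary imply the hypotheses of both theorems, so both are available. Also, Theorem \ref{th:sub} guarantees that pairs $(\lambda_0,\gamma_0)$ with $\PP(S_{\lambda_0,\gamma_0})=0$ indeed exist (fix any $\gamma_0\in(0,\infty]$ and pick $\lambda_0$ large enough), so the hypothesis of the corollary is non-vacuous.

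Now, given such a pair $(\lambda_0,\gamma_0)$, I would apply Theorem \ref{th:super} with the particle recovery rate set to $\lambda_0$. Since $d\geq 2$, $k\geq 1$, $\lambda_0\in(0,\infty)$, and $\PP(M=0)<p_c^{\sss\rm{site}}$ by assumption, the theorem yields the existence of some $\tilde{\gamma}>0$ with $\PP(S_{\lambda_0,\tilde{\gamma}})>0$.

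Next, I would invoke the monotonicity statement of Proposition \ref{prop:monoton}: for fixed $\lambda_0$, the map $\gamma\mapsto \PP(S_{\lambda_0,\gamma})$ is non-increasing, so $\PP(S_{\lambda_0,\gamma})\geq \PP(S_{\lambda_0,\tilde{\gamma}})>0$ for every $\gamma\in(0,\tilde{\gamma}]$. The same monotonicity, applied in its contrapositive form, shows that $\tilde{\gamma}<\gamma_0$: if we had $\tilde{\gamma}\geq \gamma_0$, then $\PP(S_{\lambda_0,\gamma_0})\geq \PP(S_{\lambda_0,\tilde{\gamma}})>0$, contradicting the choice of $\gamma_0$. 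Setting $\gamma_1:=\tilde{\gamma}$ then gives $\gamma_1\in(0,\gamma_0)$ and $\PP(S_{\lambda_0,\gamma})>0$ for every $\gamma<\gamma_1$, as required.

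There is no genuine obstacle here; the corollary is really just the observation that Theorems \ref{th:super} and \ref{th:sub} cannot be simultaneously consistent with $\gamma\mapsto\PP(S_{\lambda_0,\gamma})$ being identically zero, and Proposition \ref{prop:monoton} then propagates the positivity of the survival probability to an entire interval of small $\gamma$-values lying strictly below $\gamma_0$.
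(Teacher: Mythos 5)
Your proposal is correct and is essentially the paper's own argument: the paper states the corollary as an immediate consequence of combining Theorem \ref{th:super} and Theorem \ref{th:sub}, with Proposition \ref{prop:monoton} supplying the monotonicity needed to place $\gamma_1$ strictly below $\gamma_0$ and to propagate positivity of the survival probability to all smaller $\gamma$. Your observation that Theorem \ref{th:sub} serves only to make the hypothesis non-vacuous, while the logical content rests on Theorem \ref{th:super} plus monotonicity, is exactly right.
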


We remark at this point that the proof of Theorem \ref{th:super} is valid also for a version of the model where infection is only transferred to particles upon contact with a contaminated site, that is, a particle does not get infected by having contact with another infected particle, but only by jumping onto a contaminated site. This may be more realistic for some types of cattle epidemics, e.g.\ udder disease among cows. This version of the model is stochastically smaller than our original model, so Theorem \ref{th:sub} (and thereby also Corollary \ref{corr:super}) remains trivially true.

\subsubsection*{Open problems and related work}

Our ambition with this paper is to introduce the effect of site contamination in a spatial epidemic model and to prove some initial results using relatively simple methods. Several issues remain to deal with. According to Corollary \ref{corr:super}, for a given value of $\lambda$, the model can be made supercritical by picking $\gamma$ small. Is it also true that, if we pick $\gamma$ large enough for a given value of $\lambda$, the model becomes subcritical? We conjecture that this is not true in general, but depends on the fixed value of $\lambda$. To understand this, we first note that it is reasonable to expect that Theorem \ref{th:sub} has a counterpart saying that $\PP(S_{\lambda,\gamma})>0$ if $\lambda$ is sufficiently small. Indeed, if particles remain infected for a very long time, then the infection should be able to survive. Combining this with Proposition \ref{prop:monoton} yields that, for a given value of $\gamma$, there is a phase transition in $\lambda$. Furthermore, we conjecture that the critical value is strictly decreasing as $\gamma$ increases, that is, the supercritical region becomes smaller if sites recover faster.

\begin{conj}[Phase transition in $\lambda$]\label{conj:pt_lambda}
Fix $d\geq 1$, $k\geq 1$ and $\gamma\in(0,\infty]$. Under suitable assumptions on the distribution of $M$, there exists a critical value $\lambda_c^{\gamma}\in(0,\infty)$ such that $\PP(S_{\lambda,\gamma})>0$ if $\lambda<\lambda_c^{\gamma}$ but $\PP(S_{\lambda,\gamma})=0$ if $\lambda>\lambda_c^{\gamma}$. Furthermore, if $\gamma_0<\gamma_1$, then $\lambda_c^{\gamma_0} >\lambda_c^{\gamma_1}$.
\end{conj}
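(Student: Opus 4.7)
The plan is to separate the conjecture into two pieces: existence of a finite positive critical value $\lambda_c^\gamma$, and strict ordering of these critical values in $\gamma$. For existence, Proposition~\ref{prop:monoton} already implies that $\{\lambda:\PP(S_{\lambda,\gamma})>0\}$ is a down-interval, so setting $\lambda_c^\gamma:=\sup\{\lambda:\PP(S_{\lambda,\gamma})>0\}$ gives a well-defined element of $[0,\infty]$, and Theorem~\ref{th:sub} (with a mild extension of its proof if one wishes to weaken the a.s.\ boundedness of $M$) yields $\lambda_c^\gamma<\infty$. The non-trivial direction is $\lambda_c^\gamma>0$, i.e.\ survival for small $\lambda$. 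I would establish this by a block/renormalization argument in the spirit of the proof of Theorem~\ref{th:super}: fix a spatial block $B_L$ of side $L\gg k$ and a time window $T$ of order $\lambda^{-1}$, and declare a block-time cell \emph{good} if an infection present in $B_L$ at the start of the window has spread to each of the $2d$ neighbouring blocks by the end. For $\lambda$ small, infected particles typically remain infected throughout the window and have ample time to visit their whole accessible region, repeatedly contaminate sites, and be visited by particles with home bases in adjacent blocks, so tuning $L$ and $T$ one can make the probability of \emph{good} arbitrarily close to $1$. A Bezuidenhout--Grimmett-type coupling with supercritical oriented site percolation then gives $\PP(S_{\lambda,\gamma})>0$.

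For the strict monotonicity $\lambda_c^{\gamma_0}>\lambda_c^{\gamma_1}$ when $\gamma_0<\gamma_1$, I would attempt an essential-enhancement argument. Use the canonical Harris coupling at a common $\lambda$: one site-clearance Poisson clock of rate $\gamma_1$ at each site, with the $\gamma_0$-process driven by an independent thinning of rate $\gamma_0$. Pick $\lambda$ slightly larger than $\lambda_c^{\gamma_1}$ and argue that the extra clearance events (rate $\gamma_1-\gamma_0$) strictly enhance extinction in a finite renormalization window; combined with the block construction above this would yield that the $\gamma_0$-process is supercritical on an open interval of $\lambda$-values extending strictly beyond $\lambda_c^{\gamma_1}$. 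A convenient formulation is a Russo-type derivative estimate showing that an appropriate finite-volume survival functional has strictly negative derivative in $\gamma$ whenever it is non-zero, so that integrating over a positive $\gamma$-increment produces a quantitative gain.

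The main obstacle is clearly the strict inequality. Monotone coupling delivers $\lambda_c^{\gamma_0}\geq\lambda_c^{\gamma_1}$ almost for free, but promoting this to strict inequality is notoriously delicate, as with the analogous questions for modifications of contact or Boolean percolation processes. The Aizenman--Grimmett essential-enhancement paradigm requires a local, probabilistically ``free'' modification whose marginal effect on survival can be lower-bounded uniformly in the surrounding configuration; in our setting the natural such modification is a local suppression of a clearance event, but controlling its effect rigorously requires a Russo-type formula adapted to a non-attractive, continuous-time, interacting-random-walk dynamics in which sites, particles and contamination states are all evolving. A secondary obstacle is replacing the a.s.\ boundedness of $M$ by a moment condition, which one should be able to handle by truncation together with a tail estimate on $M$.
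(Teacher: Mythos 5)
First, a point of order: the statement you are proving is a \emph{conjecture} in the paper. The authors do not prove it; they support it with simulations and sketch exactly the two-step program you propose (percolation/renormalization comparison for survival at small $\lambda$, enhancement techniques for the strict inequality), explicitly flagging both steps as open. So the question is whether your proposal closes the gaps they leave, and it does not. Your first step is fine: monotonicity in $\lambda$ (the second half of Proposition~\ref{prop:monoton}) makes $\lambda_c^\gamma:=\sup\{\lambda:\PP(S_{\lambda,\gamma})>0\}$ well defined, and Theorem~\ref{th:sub} gives $\lambda_c^\gamma<\infty$. But note that $\lambda_c^\gamma>0$ cannot be extracted from Theorem~\ref{th:super}, since there the admissible $\gamma$ depends on the fixed $\lambda$; survival at small $\lambda$ for fixed $\gamma$ (in particular $\gamma=\infty$) is genuinely open, and your block argument hides the central difficulty exactly where you write that ``tuning $L$ and $T$ one can make the probability of \emph{good} arbitrarily close to $1$'' and then invoke a Bezuidenhout--Grimmett-type coupling. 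When survival is driven by small $\lambda$ rather than by persistent site contamination, infection must pass between blocks via particles from neighbouring blocks meeting; hence the good events of adjacent and second-adjacent cells involve the same particles and overlapping time windows and are \emph{not} independent. What is needed is a lower bound on the conditional probability that a cell is good, given the entire space-time configuration outside its dependence range, uniform over worst-case positions and infection states of the particles at the start of the window; only then can one apply domination by product measure as in \cite{LSS}. This is precisely the point at which the authors state, in the remark following the proof of Theorem~\ref{th:super}, that the process ``has complicated dependencies in space/time and we have not been able to sort out necessary estimates and other details in such a comparison.'' Your sketch supplies no mechanism for this uniformity.

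The second half, strict monotonicity $\lambda_c^{\gamma_0}>\lambda_c^{\gamma_1}$, has the same status. The Aizenman--Grimmett essential-enhancement paradigm and Russo's formula are built for static product-measure percolation; the paper cites \cite[Section 3.3]{Grim} as a possible direction while calling it ``presumably harder.'' To run your argument one needs a Russo-type derivative formula in $\gamma$ for a finite-volume survival functional of a continuous-time system of interacting random walks, together with a uniform lower bound on the effect of suppressing a clearance event, valid over all surrounding configurations. No such formula exists off the shelf, and you do not construct one; you correctly name this as the main obstacle, but naming an obstacle does not remove it. In summary, your proposal coincides with the authors' own proposed program and stops at exactly the two points they identify as open; as it stands it is a plausible research plan, not a proof.
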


Here ``suitable assumptions on the distribution of $M$'' could for instance be the ones in Corollary \ref{corr:super} but, as indicated, these could presumably be weakened. Let $\lambda_c^{\infty}$ denote the critical value in a model without site contamination. It follows from Conjecture \ref{conj:pt_lambda} and Proposition \ref{prop:monoton} that, regardless of the value of $\gamma$, the model is supercritical for $\lambda<\lambda_c^{\infty}$, since site infection helps the infection. A small value of $\lambda$ is hence sufficient in itself to make the model supercritical. For $\lambda>\lambda_c^{\infty}$ on the other hand, it is reasonable to expect that there is a non-trivial phase transition in $\gamma$. If $\gamma$ is large, infected sites will then be cleared very fast and should not be able to change the subcriticality of the model without site infection.

\begin{conj}[Phase transition in $\gamma$]\label{conj:pt_gamma}
Fix $d\geq 1$, $k\geq 1$ and $\lambda\in(0,\infty)$, and impose suitable assumptions on the distribution of $M$. If $\lambda<\lambda_c^{\infty}$, then $\PP(S_{\lambda,\gamma})>0$ for all values of $\gamma\in(0,\infty]$. If $\lambda>\lambda_c^{\infty}$, then there exists a critical value $\gamma_c^{\lambda}\in(0,\infty)$ such that $\PP(S_{\lambda,\gamma})>0$ for $\gamma<\gamma_c^{\lambda}$, but $\PP(S_{\lambda,\gamma})=0$ for $\gamma>\gamma_c^{\lambda}$.
\end{conj}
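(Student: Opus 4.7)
The conjecture breaks into the two cases $\lambda<\lambda_c^{\infty}$ and $\lambda>\lambda_c^{\infty}$. The first case reduces essentially to Proposition~\ref{prop:monoton}: the hypothesis $\lambda<\lambda_c^{\infty}$ states exactly that $\PP(S_{\lambda,\infty})>0$, and extending the monotonicity of Proposition~\ref{prop:monoton} to $\gamma_1=\infty$ by a straightforward coupling on a common graphical representation (take site clearance rates $\gamma_n\uparrow\infty$) yields $\PP(S_{\lambda,\gamma})\ge \PP(S_{\lambda,\infty})>0$ for every $\gamma\in(0,\infty]$. In the second case, Theorem~\ref{th:super} directly supplies supercriticality for all sufficiently small $\gamma$; once subcriticality for some large enough $\gamma$ is established, Proposition~\ref{prop:monoton} furnishes a single critical value $\gamma_c^{\lambda}:=\sup\{\gamma>0:\PP(S_{\lambda,\gamma})>0\}\in(0,\infty)$ with the conjectured properties. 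So the main new work is to establish subcriticality for large $\gamma$ when $\lambda>\lambda_c^{\infty}$.

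\textbf{Subcriticality for large $\gamma$.} My plan here is a perturbation around the no-contamination model. Couple the processes with clearance rates $\gamma$ and $\gamma=\infty$ on the same graphical construction, sharing every particle jump, infection and recovery clock. The two infected-particle processes then agree except at \emph{extra infections}, i.e.\ moments when a healthy particle jumps onto a site that is still contaminated in the $\gamma$-process. Since an infected particle has a mean lifetime of $\lambda^{-1}$, it contaminates in expectation $O(\lambda^{-1})$ sites; each contaminated site has a mean lifetime of $\gamma^{-1}$, during which the expected number of visits by healthy particles is bounded by a constant $C=C(k,\E[M])$ times $\gamma^{-1}$. Thus the expected number of extra offspring attributable to site contamination per infected particle is at most $C/(\lambda\gamma)\to 0$ as $\gamma\to\infty$. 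I would then dominate the infected-particle tree by a multi-type Galton--Watson process in which each particle has its no-contamination offspring distribution together with an independent Poisson$(C/(\lambda\gamma))$ number of extra offspring; extinction of the full process follows as soon as this dominating tree is subcritical, which occurs for $\gamma$ large.

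\textbf{Main obstacle.} The genuine difficulty is in turning ``$\PP(S_{\lambda,\infty})=0$'' into the quantitative branching-process inequality needed to close the above argument: a.s.\ extinction is not the same as mean offspring strictly less than one, and the two notions can coincide only generically away from criticality. My preferred route is to reprove Theorem~\ref{th:sub} in a quantitative form -- say, exponential decay of the expected number of infected particles at some rate $r(\lambda)>0$ whenever $\lambda$ is strictly above the relevant critical value -- and then absorb the $O(1/(\lambda\gamma))$ perturbation into this decay rate by choosing $\gamma$ large. A more robust alternative is a block/renormalization argument on blocks of side much larger than $k$, showing that in the no-contamination model the block-to-block transfer of infection is subcritical in a site-percolation sense, while the additional transfer produced by site contamination is $O(1/\gamma)$ uniformly in the block. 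Either way one has to contend with the dependencies introduced by the confinement of particles to their accessible regions $\mathcal{R}_x$, which couple the contamination events at neighbouring sites; that is where I expect the most delicate technical work to lie.
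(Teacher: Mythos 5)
First, a point of status: the statement you are proving is Conjecture~\ref{conj:pt_gamma} of the paper, which the authors explicitly leave open; they offer only a heuristic (``if $\gamma$ is large, infected sites are cleared very fast and should not be able to change the subcriticality of the model without site infection'') together with simulation evidence, so there is no proof in the paper to compare yours against. Your reduction matches the authors' own informal reasoning as far as it goes: the case $\lambda<\lambda_c^{\infty}$ follows from monotonicity (Proposition~\ref{prop:monoton} extended to $\gamma_1=\infty$, which is a routine coupling), Theorem~\ref{th:super} gives $\gamma_c^{\lambda}>0$ (though only for $d\geq 2$ and $\PP(M=0)<p_c^{\sss\rm{site}}$, whereas the conjecture is stated for $d\geq 1$), and monotonicity in $\gamma$ upgrades ``subcritical for some large $\gamma$'' to the existence of a genuine critical value. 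So, as you correctly say, everything hinges on proving subcriticality for large $\gamma$ when $\lambda>\lambda_c^{\infty}$.

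That step is where your proposal has a genuine gap, and it is the same obstruction that makes this a conjecture rather than a theorem. Your perturbation argument needs the $\gamma=\infty$ model to be subcritical in a quantitative, branching-dominated sense: a dominating tree with mean offspring strictly below $1$, or exponential decay of the infected set. But $\lambda_c^{\infty}$ is defined through almost-sure extinction, and for $\lambda$ slightly above $\lambda_c^{\infty}$ no such quantitative statement is available; producing one would amount to proving sharpness of the phase transition for this interacting particle system, a major problem in its own right. The proof of Theorem~\ref{th:sub} yields mean offspring below $1$ only for $\lambda$ large, leaving an interval of $\lambda$'s above $\lambda_c^{\infty}$ where nothing quantitative is known, and your $O(1/(\lambda\gamma))$ perturbation has nothing subcritical to be absorbed into there. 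A second, related problem: the no-contamination model has no natural tree structure to perturb. Particles are confined to overlapping accessible regions $\mathcal{R}_x$, meet the same particles repeatedly, and can be re-infected, so ``the no-contamination offspring distribution'' of an infected particle is not well defined, and a.s.\ extinction does not translate into any dominating Galton--Watson process being subcritical. Your block-renormalization alternative runs into exactly the dependency issue the authors flag in the remark after the proof of Theorem~\ref{th:super}: block classifications are dependent through shared particles, and although finite-range dependent percolation can be dominated by product measure \cite{LSS}, the uniform subcriticality estimates for the no-contamination block dynamics at a general $\lambda>\lambda_c^{\infty}$ are precisely what is missing. In short, your easy steps are correct and your identification of the obstacle is accurate, but the proposal does not close the gap; the statement remains open.
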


To summarize, by tuning $\lambda$ we can control the behavior of the model for any given value of $\gamma$ while, by tuning $\gamma$, we may not be able to make the model subcritical. Indeed, the infection is driven by contacts between particles and by contacts between particles and sites. The parameter $\lambda$ can control infection arising from both theses effects, while $\gamma$ can only control the latter effect.

\begin{figure}
\label{fig}
\centering
\includegraphics[height=7cm,width=7cm]{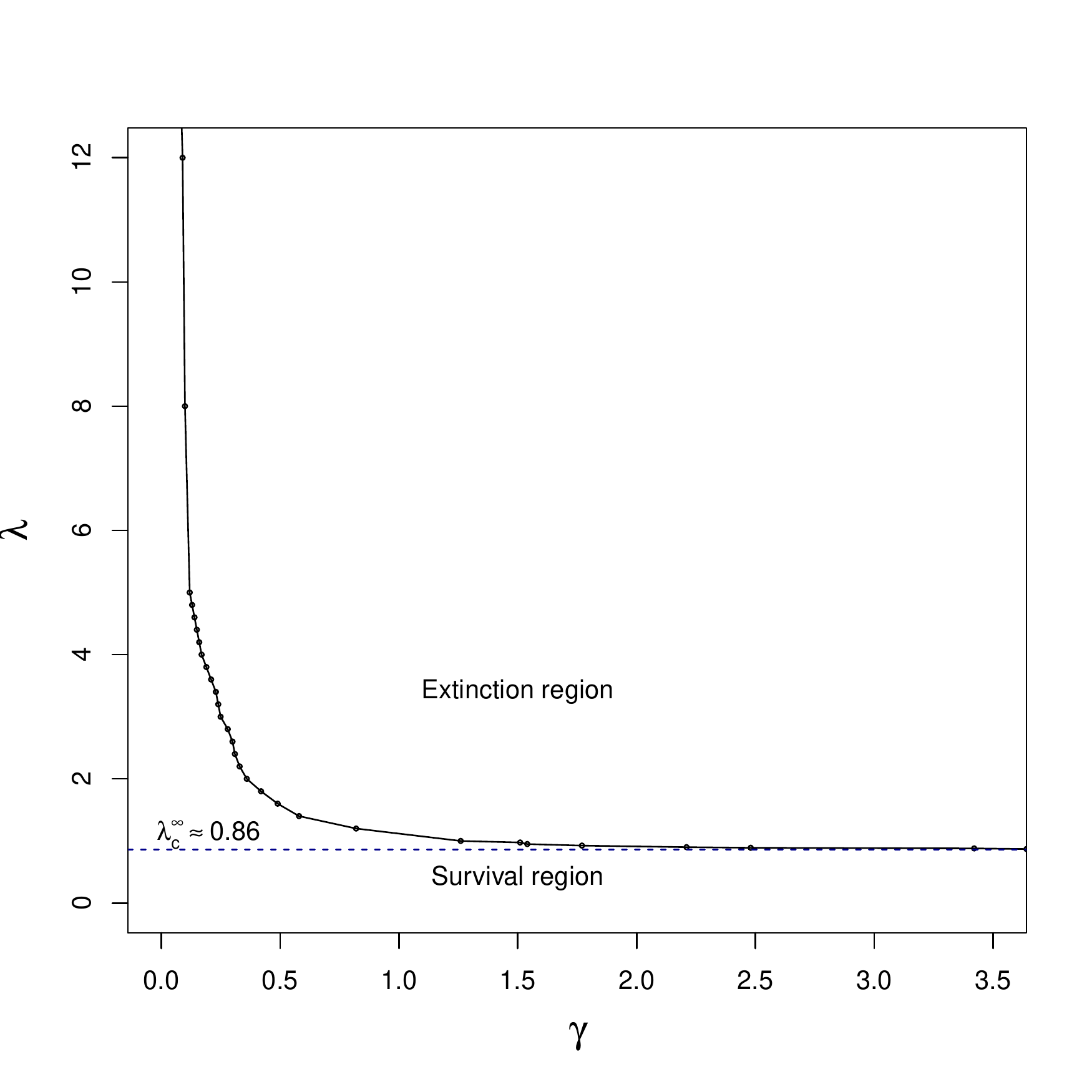}
\includegraphics[height=7cm,width=7cm]{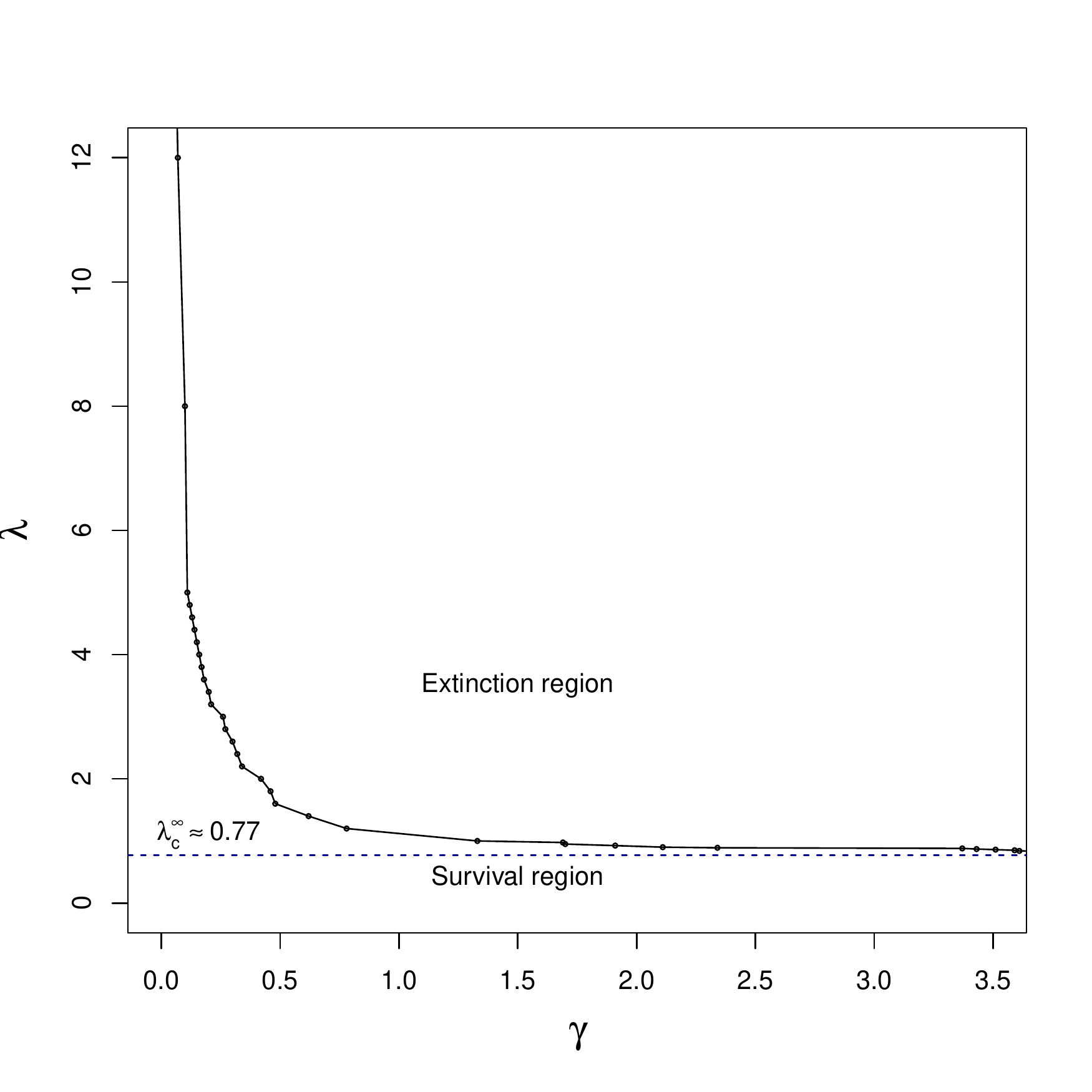}
\caption{Estimates of the phase diagram on $\mathbb{Z}^2$ with $M=1$ and $k=1$. In both pictures we have $L=30$ and $K=100000$. In the left-hand picture, we have $S=30$ and the estimate of $\lambda_c^{\infty}$ is approximately $.86$ while, in the right-hand figure, we have $S=20$ and the estimate of $\lambda_c^{\infty}$ is approximately $.77$.}
\end{figure}

We illustrate the conjectures through simulations of our process with $M=1$ and $k=1$ on a square lattice with $L\times L$ sites under periodic boundary conditions. The approximation we obtain for the phase diagram in this case supports the above conjectures; see Figure \ref{fig}. First, in order to obtain an estimate for $\lambda_c^{\infty}$, we consider the process without site infection (that is, with $\gamma=\infty$). For fixed $K,S>0$ and a given initial $\lambda_0>0$, we run up to $S$ simulations of the process with recovery rate $\lambda_0$. Each simulation consists of up to $K$ steps for the process, where each step is either a particle jump, a particle recovery or a site clearance. The simulations are run sequently until the first time we observe at least one infected particle at the $K$-th step. If this occurs, we declare that the process survives with positive probability for that value of $\lambda_0$. If this does not occur in the $S$ simulations, we repeat the procedure with smaller values of $\lambda_0$, until we find a first value of  $\lambda_0$ for which the simulated process has at least one infected particle at the $K$-th step.  Then, for a given set of recovery rates $\{\lambda_1, \ldots, \lambda_m \}$, $m\geq 1$, all greater than the estimate of $\lambda_c^{\infty}$, we run a similar procedure to estimate the values of $\{\gamma_c^{\lambda_i}\}_{i=1}^{m}$. The picture of the phase diagram obtained from this procedure is not exact, since changes in the settings -- specifically, in the values of $L$, $K$ and $S$ -- affect the precise values of the estimated thresholds. However, the results obtained for different settings all exhibit a global behavior which supports Conjecture \ref{conj:pt_lambda} and \ref{conj:pt_gamma}; see Figure \ref{fig} for two examples.

To confirm Conjecture \ref{conj:pt_lambda}, it remains to show that $\PP(S_{\lambda,\gamma})>0$ if $\lambda$ is small enough. This could perhaps be done by comparison with a site percolation process along the same lines as in the proof of Theorem \ref{th:super}. Relying on direct infections between particles would however require a dependent percolation process; see the remark following the proof of Theorem \ref{th:super}. Proving strict inequalities for the critical value is presumably harder, but may be possible using enhancement techniques (see \cite[Section 3.3]{Grim}) adapted to interacting particle systems. As mentioned, we are convinced that Theorem \ref{th:super} is true also in $d=1$. This might be possible to prove by modifying techniques developed for the contact process and other interacting particle systems; see \cite{DBook,DBook2}.

We have analyzed a model where the range of the particles is bounded by $k<\infty$. Taking $k=\infty$ gives a model where the particles perform independent unrestricted random walks on $\mathbb{Z}^d$. Without site contamination, this model was first formulated by Spitzer in the 1970s and it has turned out to be technically very difficult to handle. For a Poisson distributed initial number of particles per site, this model was shown in an extensive paper by Kesten and Sidoravicius \cite{KS_06} to have a phase transition in $\lambda$, and versions of the model without recovery have been considered in \cite{KS_03,KS_05, KS_08}. The fact that our particles can move only in a bounded region gives better control of the particle configuration and greatly simplifies the analysis. It is of course a natural project to try to prove analogues of Conjecture \ref{conj:pt_lambda} and \ref{conj:pt_gamma} for a model with unrestricted movements.

As for related models, we mention the activated random walk model, which is similar to the model with $k=\infty$ considered in \cite{KS_06}, but where only the infected particles move; see e.g.\ \cite{DRS,RS,ST}. Furthermore, a situation where only the infected particles move and, in addition, recovered particles are removed from the system is described by the frog model; see \cite{frogs_shape, pt_frogs, frogs_surv}. The fact that only infected particles move makes the analysis simpler compared to the case when also the healthy particles move. However, such a simplification is not possible in our setup, since for site contamination to play a role clearly also healthy particles need to move.

\section{Proofs}

In this section we prove Theorem \ref{th:super} by bounding it with a site percolation process and \ref{th:sub} by dominating it with a branching process. We first prove Proposition \ref{prop:monoton}, which is immediate from the natural coupling of processes with different parameter values.

\begin{proof}[Proof of Proposition \ref{prop:monoton}.]
Let $S_{\lambda,\gamma}(t)$ denote the event that there are infected particles or sites at time $t$. Clearly $S_{\lambda,\gamma}(s)\supset S_{\lambda,\gamma} (t)$ for $s\leq t$ and hence
\begin{equation}\label{eq:inf_surv_limit}
\PP(S_{\lambda,\gamma})=\lim_{t\to\infty}\PP(S_{\lambda,\gamma}(t)).
\end{equation}
Fix $\lambda$ and $0\leq \gamma_0\leq \gamma_1$. The processes with parameter values $(\lambda,\gamma_0)$ and $(\lambda,\gamma_1)$, respectively, can be coupled in such a way that, if a particle/site is infected at time $t$ in the process with site clearance rate $\gamma_1$, then it is infected at time $t$ also in the process with rate $\gamma_0$: Let each particle independently be equipped with a Poisson process with rate $\lambda$, specifying the possible recovery times of the particle, that is, if the particle is infected at time $t-\varepsilon$ and there is a Poisson event at time $t$, then the particle goes back to being susceptible at time $t$. Similarly, let each site be equipped with a Poisson process with rate $\gamma_1$, specifying the possible clearance times for the sites in the epidemic process with rate $\gamma_1$. For the epidemic process with rate $\gamma_0$, we let the site clearance times be specified by thinned versions of the rate $\gamma_1$ Poisson processes, where each Poisson point is removed independently with probability $\gamma_0/\gamma_1$. If we let the particles move according to the same random walks in the two processes, it is not hard to see that, if a particle/site is infected at time $t$ in the process with rate $\gamma_1$, then it is infected at time $t$ also in the process with rate $\gamma_0$. This implies that $\PP(S_{\lambda,\gamma_0}(t))\geq \PP(S_{\lambda,\gamma_1}(t))$ for all $t$ and it follows from \eqref{eq:inf_surv_limit} that $\PP(S_{\lambda,\gamma_0})\geq \PP(S_{\lambda,\gamma_1})$. The second claim in Proposition \ref{prop:monoton} follows analogously by coupling the Poisson processes governing the particle recovery times.
\end{proof}

\begin{proof}[Proof of Theorem \ref{th:super}.] We give the proof for $d=2$, but note along the way that it is easy to generalize to $d\geq 3$. We consider a process that evolves according to the same rules as our epidemic process, but with fewer particles. Specifically, in the initial configuration, particles are placed only at points $\mathbb{Z}^2_k= \{(ik,jk):i\in\mathbb{Z} \textrm{ even}, j\in\mathbb{Z} \textrm{ odd}\}$; see Figure \ref{fig:sparse_grid}. Removing particles clearly leads to a model where it is harder for the infection to survive, and we are therefore done if we show that infinite survival has positive probability starting from this more sparse configuration.

\begin{figure}
\centering
\includegraphics[width=.6\textwidth]{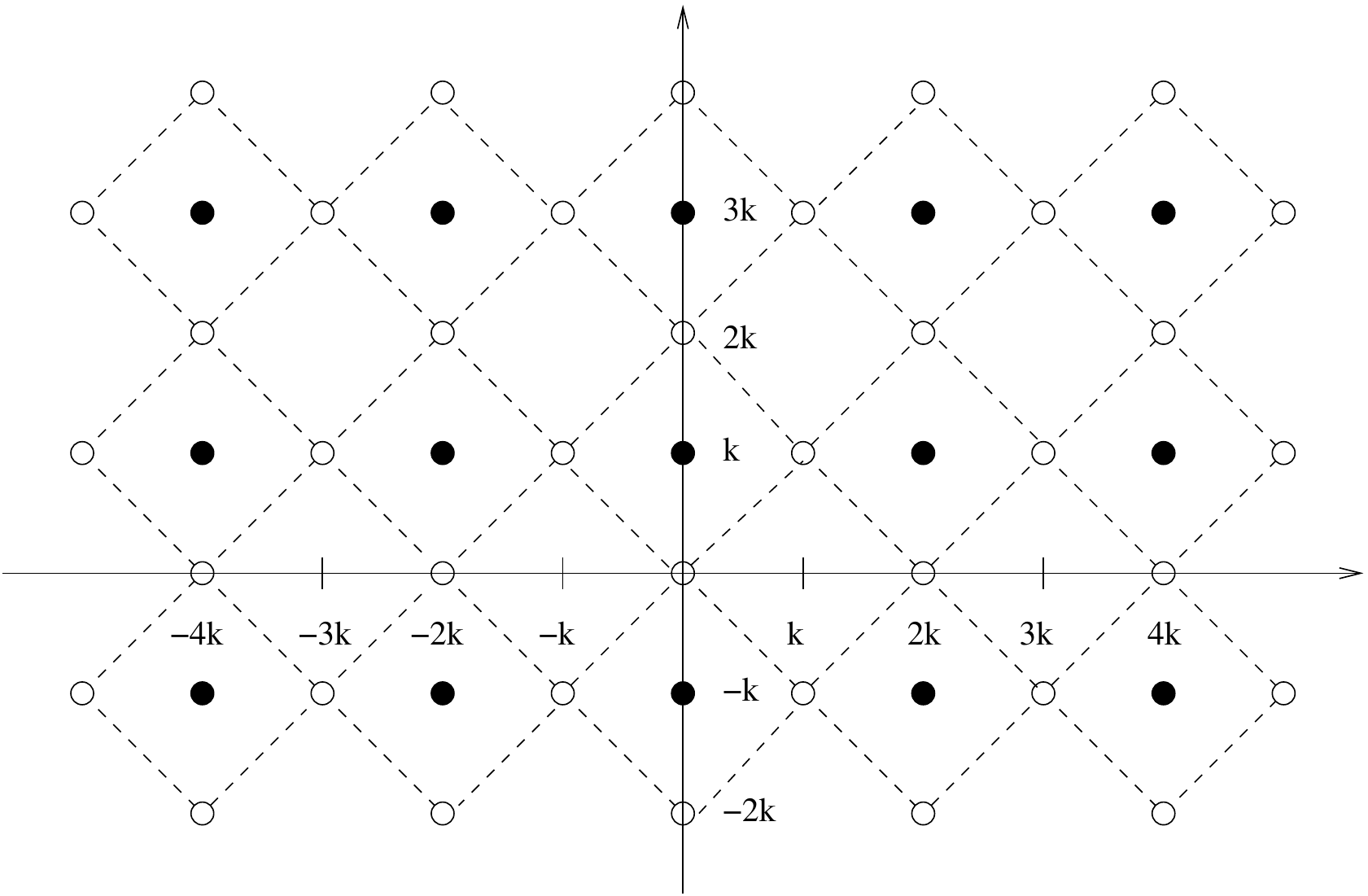}
\caption{Picture of $\mathbb{Z}^2_k$. Black dots represent center sites, where particles have their home locations, and white dots represent corner sites of accessible regions.}\label{fig:sparse_grid}
\end{figure}

The accessible region $\mathcal{R}_z$ of a particle $\xi_z$ with home location $z=(ik,jk)\in\mathbb{Z}^2_k$ consists of the point $z$, which we shall refer to as the center site, along with all sites at $L_1$-distance at most $k$ from $z$. We shall refer to the sites $(ik\pm k,jk\pm k)$ as corner sites of $\mathcal{R}_z$; see Figure \ref{fig:sparse_grid}. For each region $\mathcal{R}_z$, $z\in\mathbb{Z}^2_k$, we say that it is activated when the infection first enters the region. This happens in that a particle $\xi_{(ik\pm 2k,jk\pm 2k)}$ of one of the adjacent regions $\mathcal{R}_{(ik\pm 2k,jk\pm 2k)}$ visits a corner site that is shared with $\mathcal{R}_z$, causing that site to be contaminated. We declare an activated region $\mathcal{R}_z$, $z\in\mathbb{Z}^2_k$, to be open if (i) $M_z\geq 1$, so that there is at least one particle with $z$ as home location, and (ii) from the moment when the region is activated, one of its particles $\xi_z$ manages to transfer the infection from the activating corner site, denoted by $z_{\sss act}$, to all other corner sites in $\mathcal{R}_z$ before $z_{\sss act}$ recovers. Clearly this causes all other adjacent regions to be activated in case they have not already been so, and we conclude that the infection for sure survives indefinitely if the origin belongs to an infinite cluster of open regions.

As for the probability that an activated region $\mathcal{R}_z$ is open, assume that $M_z\geq 1$ and let $A_{\gamma}$ denote the event that a given particle $\xi_z$ visits $z_{\sss act}$ before $z_{\sss act}$ is cleared (counting from the time of activation). The probability $\PP(A_{\gamma})$ becomes smaller if $\xi_z$ is located far away from $z_{\sss act}$ at the time of activation, but it is not hard to see that it can be uniformly bounded from below (regardless of the position of $\xi_z$ at the time of activation) by a bound that can be made arbitrarily close to 1 by decreasing the site recovery rate $\gamma$. When visiting $z_{\sss act}$, the particle $\xi_z$ then becomes infected. Once $\xi_z$ is positioned at $z_{\sss act}$ it will make excursions from  $z_{\sss act}$, eventually returning to $z_{\sss act}$ and, if $z_{\sss act}$ is still contaminated when $\xi_z$ returns, then $\xi_z$ becomes infected again (in case it has recovered). Let $B_{\lambda,\gamma}$ denote the event that, before $z_{\sss act}$ is cleared, the particle $\xi_z$ makes an excursion from $z_{\sss act}$ that visits all other corner sites of $\mathcal{R}_z$ and that, in addition, $\xi_z$ remains infected during the whole excursion. The probability that a given excursion visits all sites in $\mathcal{R}_z\setminus z_{\sss act}$ while $\xi_z$ remains infected may well be very small, in particular if the particle recovery rate $\lambda$ is large. However, by decreasing $\gamma$ so that $z_{\sss act}$ stays contaminated for a very long time, we can give $\xi_z$ very many attempts to succeed with such an excursion. Hence, for a given value of $\lambda$, the probability $\PP(B_{\lambda,\gamma})$ can be made arbitrarily close to 1 by decreasing $\gamma$. Furthermore, clearly the event $A_\gamma\cap B_{\lambda,\gamma}$ implies that the region is declared open.

To summarize, we can bound the probability that an activated region is open by $p_{\lambda,\gamma}=\PP(M\geq 1)\tilde{p}_{\lambda,\gamma}$ where, for a given value of $\lambda$, the probability $\tilde{p}_{\lambda,\gamma}$ can be made arbitrarily close to 1 by decreasing $\gamma$ and, by assumption, $\PP(M\geq 1)>p_c^{\sss\rm{site}}$. Furthermore, if a region is declared open or not is determined only by its number of particles and by the movement of a particle of that region, which is independent of what happens in other regions. At time 0, the regions $\mathcal{R}_{(0,k)}$ and $\mathcal{R}_{(0,-k)}$ are activated in that the origin is infected. It follows that, if with positive probability one of the sites $(0,k)$ and $(0,-k)$ belongs to an infinite open cluster in an independent site percolation process on $\mathbb{Z}^2_k$ where each site is open with probability $p_{\lambda,\gamma}$, then $\PP(S_{\lambda,\gamma})>0$ in the epidemic process. The probability $p_{\lambda,\gamma}$ can be made larger than the critical value for standard site percolation by decreasing $\gamma$ and hence the theorem follows.
\end{proof}

\emph{Remark:} We stress that the reason for working with an initial configuration with fewer particles is that regions are then classified (as open or not) independently. The proof relies only on infection transfer via sites -- that is, not via direct contact between particles -- which justifies the remark after Corollary \ref{corr:super}. As mentioned in Section 1, it is natural to expect that the process is supercritical also for sufficiently small $\lambda$ with $\gamma$ fixed (perhaps $\gamma=\infty$ so that there is no site infection). The infection then survives in that particles remain infected for a long time. However, generalizing the above basic proof to cover also this case would lead to dependencies between regions: we then need that particles in neighboring regions meet in order for the infection to survive and this introduces dependencies in the classification for nearest neighbor and second nearest neighbor regions. Percolation processes with finite dependencies can be dominated by product measure for large enough marginal probabilities, see \cite{LSS}, but our process has complicated dependencies in space/time and we have not been able to sort out necessary estimates and other details in such a comparison.\hfill$\Box$

\begin{proof}[Proof of Theorem \ref{th:sub}]
We first describe a process that dominates our epidemic process in the sense that, if the dominating process dies out, then also the epidemic process dies out. We then show that the dominating process dies out almost surely for large $\lambda$ by in turn dominating it with a branching process.


Let $v_k=v_k(d)$ denote the number of sites within $L_1$-distance $k$ from a given site in $\mathbb{Z}^d$. Consider a process where the number of particles per site is constant over time and equal to $\bar{m}v_k$ -- this is the maximal number of particles that can be located at a site at any given time in the epidemic process. At time 0, the origin and all its particles are infected. The infection then evolves as follows. Each infected particle sends out infectious signals at rate 1. An infectious signal is directed towards a randomly chosen neighbor and the signal causes this neighboring site along with all its $\bar{m}v_k$ particles to become infected. All particles -- both infected and healthy -- also send out reinforcement signals at rate 1, independently of the infectious signals. A reinforcement signal is also directed towards a randomly chosen neighbor and, if there is infection present at the neighboring site -- that is, if either the site or any of the particles on it is infected -- then the infection is reinforced in that all particles and the site become infected. An infected particle recovers at rate $\lambda$ and an infected site is cleared at rate $\gamma$, independently of everything else. This process will be referred to as the maximal load epidemic.

The maximal load epidemic contains more particles than the original process, since the number of particles per site is constant equal to the maximal possible number in the epidemic. This clearly helps the infection. It also makes it easier to work with than the original process in that the number of particles per site is independent of the configuration at neighboring sites. Furthermore, the particles do not move, but affect their neighbors in a similar way as in the epidemic: An infectious signal correspond to an infected particle jumping to a neighboring site in the epidemic process, and a reinforcement signal correspond to a healthy particle jumping onto a site where infection is present and then becoming infected. Apart from containing more particles, the maximal load epidemic dominates the original epidemic in that infected particles send out both infectious signals and reinforcement signals independently, and in that a reinforcement signal causes everything at a site to be infected (rather than the number of infected particles at the site going up by one). It is straightforward to couple the two processes in such a way that the number of infected particles per site at each given time is at least as large in the maximal load epidemic as in the original epidemic, and a site that is contaminated in the original epidemic is also contaminated in the maximal load epidemic.


One way of constructing the maximal load epidemic is as follows. For $x\in\mathbb{R}^d$ and $t\geq 0$, let $\mathcal{P}_{x,t}$ be a family of independent Poisson processes started at time $t$ consisting of:
\begin{itemize}
\item[(i)] one Poisson process with rate 1 for each one of the $\bar{m}v_k$ particles at $x$ -- these govern the times when infectious signals can be sent out from the particles at $x$;
\item[(ii)] one Poisson process with rate $1$ for each one of the particles at the neighboring sites of $x$ -- these govern the times when $x$ is hit by reinforcement signals;
\item[(iii)] one Poisson process with rate $\lambda$ for each particle at $x$ -- these govern the times when the particles at $x$ can recover;
\item[(iv)] one Poisson process for the site $x$ -- this govern times when $x$ can recover.
\end{itemize}
We now define a local infection process at $x$ started at time $t$ by infecting the site $x$ and all its particles and assigning a Poisson family $\mathcal{P}_{x,t}$ to $x$. The particles at $x$ then send out infectious signals, recover and (possibly) get re-infected by reinforcement signals from the neighbors at times determined by the processes in $\mathcal{P}_{x,t}$. Also the site $x$ is cleared and is (possibly) re-infected at times determined by $\mathcal{P}_{x,t}$. Note that infectious signals from the neighbors are not taken into account in defining a local process -- these will instead start new local processes, as described below. The local process terminates after a finite time when the site $x$ is cleared and all its particles have recovered, and the output of the local process is the set $\mathcal{S}_{x,t}$ of infectious signals sent out from $x$ up until that time. An element of $\mathcal{S}_{x,t}$ can be represented as $(y,s)$, where $y$ is the neighbor of $x$ receiving the infectious signal and $s\geq t$ is the time when this happens.

The maximal load epidemic is obtained by starting a local infection process at the origin at time 0. Then, when a neighboring site of the origin receives an infectious signal, a local infection process based on a Poisson family that is independent of $\mathcal{P}_{0,0}$ is started at the neighboring site. This is then iterated: infectious signals start new local infection processes based on Poisson families that are independent of all previous Poisson families. Furthermore, if a local infection process is still running at a site that is hit by a new infectious signal, then the former process is stopped.

Now consider a maximal load epidemic process generated as above, but with the modification that a local infection process is not stopped when its site is hit by a new infectious signal, but continues to generate infectious signals in parallel with the new local process until it (that is, the first local process) terminates by itself. Note that all local processes are based on independent Poisson families, as described above. An infectious signal sent out in a local process after a site has been hit by a new infectious signal has no counterpart in the epidemic, and hence this modified process dominates the maximal load epidemic in the sense that, if an infectious signal is sent out from a given site at a given time in the maximal load epidemic, then the same signal is sent out also in the modified process. As a consequence, if only finitely many infectious signals are sent out in the modified process, then the same is true also for the maximal load epidemic. Furthermore, if only finitely many infectious signals are sent out in the maximal load epidemic, then the process dies out: when the last infectious signal has been sent out, no transfer of infection will occur to healthy sites without infected particles, and (the finitely many) sites that are already infected or have infected particles will be free from infection within finite time.

The infectious signals in the modified process can be described by a branching process as follows. Let the first generation $\mathcal{Z}_1$ consist of $\mathcal{S}_{0,0}$ -- the infectious signals sent out in the initial local infection process. The second generation consists of the infectious signals generated by the local infection processes initiated by the elements in $\mathcal{S}_{0,0}$, that is,
$$
\mathcal{Z}_2=\left\{\mathcal{S}_{y,s}: (y,s)\in\mathcal{S}_{0,0} \right\}.
$$
In general, we set
$$
\mathcal{Z}_n=\left\{\mathcal{S}_{y,s}: (y,s)\in\mathcal{Z}_{n-1}\right\}.
$$
An individual $(y,s)$ in the branching process hence corresponds to an infection signal being sent out to a site $y$ at time $s$. Note that this indeed defines a branching process: each infection signal that is sent out initiates an independent local infection process, and the individuals hence generate independent identically distributed offspring.

It remains to determine the expected offspring $\E[|\mathcal{S}_{0,0}|]$ and show that this is strictly smaller than 1 for large $\lambda$, making the process subcritical. To this end, say that the origin is in a fully infected state if both the site and all particles are infected, and consider a slightly modified local infection process where the origin returns to the fully infected state as soon as an infectious signal is sent out from any of its particles. Clearly the expected number of infectious signals sent out in such a process before it terminates is at least as large as in the original process. Write $p_{\sss \rm{part}}$ for the probability that, starting from a fully infected state, all particles at the origin recover before any infectious signal is sent out or the origin is hit by any reinforcement signal from the neighbors. The $\bar{m}v_k$ particles at the origin send out infectious signals independently at rate 1, and there are $2d\bar{m}v_k$ neighboring particles each sending reinforcement signals independently at rate $1$. Hence the time until an infectious signal is sent out or a reinforcement signal hits the origin is exponentially distributed with parameter $\bar{m}v_k(1+2d)$. The time until all particles recover can be dominated by the sum of the recovery times, which is gamma distributed with parameters $\bar{m}v_k$ and $\lambda$. It follows that
$$
p_{\sss \rm{part}}\geq p_{\sss\rm{part}}^* :=\PP\left(\Gamma(\bar{m}v_k,\lambda)<{\rm{Exp}}(\bar{m}v_k(1+2d))\right).
$$
If an infectious signal is sent out or a reinforcement signal is received, then the origin returns to the fully infected state. The number of attempts that it takes until all particles manage to recover before an infectious signal is sent out or a reinforcement signal is received is stochastically dominated by a Ge($p_{\sss\rm{part}}^*$)-variable. Furthermore, this number clearly dominates the number of infectious signals sent out when all particles have recovered.

If the origin site has been cleared by the time when all its particles have recovered, then the local process terminates. If the origin site is still infected, then the remaining time until it is cleared is exponentially distributed with parameter $\gamma$. However, if the origin is hit by a reinforcement signal before the site is cleared, then it returns to the fully infected state, and infectious signals may again be sent out and estimated as above. Let $p_{\sss\rm{site}}$ denote the probability that, from the moment when all its particles have recovered, also the origin site is cleared before it is hit by a reinforcement signal. Then
$$
p_{\sss\rm{site}}\geq p_{\sss\rm{site}}^* := \PP\left({\rm{Exp}}(\gamma)<{\rm{Exp}}(2d\bar{m}v_k)\right).
$$
Now let $\{X_i\}$ be i.i.d.\ Ge$(p_{\sss\rm{part}}^*)$-distributed random variables and let $N$ be a Ge$(p_{\sss\rm{site}}^*)$-variable, independent of $\{X_i\}$. The number of infectious signals sent out by the origin before the local process terminates is then stochastically dominated by $\sum_{i=1}^N X_i$. Hence
$$
\E[|\mathcal{S}_{0,0}|]\leq \E\left[\sum_{i=1}^N X_i \right]=\E[N]\cdot\E[X_1]=\frac{1-p_{\sss\rm{site}}^*}{p_{\sss\rm{site}}^*}
\cdot\frac{1-p_{\sss\rm{part}}^*}{p_{\sss\rm{part}}^*}.
$$
Clearly $p_{\sss\rm{part}}^*\to 1$ as $\lambda\to\infty$, while $p_{\sss\rm{site}}^*$ does not depend on $\lambda$. If follows that $\E[|\mathcal{S}_{0,0}|]<1$ for large enough $\lambda$, which completes the proof.
\end{proof}

\section*{Acknowledgements}
We thank Fredrik Liljeros for inspiring discussions on cattle-epidemics leading to this work. TB and MD are grateful to Riksbankens jubileumsfond (P12-0705:1) for financial support. FL is grateful to CONICYT-FONDECYT (Proyecto de Postdoctorado N. 3160163 ) for financial support.

\end{document}